\newtheorem{theorem}{Theorem}
\newtheorem{corollary}[theorem]{Corollary}
\newtheorem{lemma}[theorem]{Lemma}
\newtheorem{proposition}[theorem]{Proposition}
\newtheorem{example}[theorem]{Example}
\newtheorem{definition}[theorem]{Definition}
\newtheorem{remark}[theorem]{Remark}
\begin{document}
\title{Computations in rational sectional category\footnotetext{This work has been supported by FEDER through the Ministerio de Educaci\'on y Ciencia project MTM2010-18089.}}
\author{J.G. Carrasquel-Vera\footnote{Institut de Recherche en Math\'ematique et Physique, Universit\'e catholique de Louvain, 2 Chemin du Cyclotron, 1348 Louvain-la-Neuve, Belgium.
E-mail: \texttt{jose.carrasquel@uclouvain.be}}}

\date{}

\maketitle
\begin{abstract}
We give simple upper bounds for rational sectional category and use them to compute invariants of the type of Farber's topological complexity of rational spaces. In particular we show that the sectional category of formal morphisms reaches its cohomological lower bound and give a method to compute higher topological complexity of formal spaces in terms of their cohomology.
\end{abstract}

 \vspace{0.5cm}
 \noindent{2010 \textit{Mathematics Subject Classification} : 55M30, 55P62.}\\
 \noindent{\textit{Keywords}: Rational homotopy, sectional category, topological complexity.}
 \vspace{0.2cm}

\maketitle

\section*{Introduction}
This paper concerns the rational sectional category of a continuous map $f\colon X\rightarrow Y$ and, in particular, the rational topological complexity of a space $X$. All the spaces considered will be supposed simply connected CW-complexes with finite Betti numbers.\\

Recall, \cite{Schwarz66}, that the sectional category of $f$, $\secat(f)$, is the least integer $m$ for which there is an open cover $\ca U_0,\ldots, U_m\cb$ of $Y$ and maps $s_i\colon U_i\rightarrow X$ such that $f\circ s_i$ is homotopic to the inclusion of $U_i$ in $Y$. When $X$ is contractible, the sectional category of $f$ is the usual LS category of $Y$, see \cite{biblels}.\\

We give special attention to the particular case, introduced by Y.B. Rudyak in \cite{Rudyak10}, of \emph{higher topological complexity} of a space $X$, $\tc_n(X)$, defined as the sectional category of the $n$-diagonal map $\Delta_n\colon X\rightarrow X^n$. The case $n=2$ yields M. Farber's well known \emph{topological complexity}, $\tc(X)$, introduced in \cite{Farber03}. Explicit computations for topological complexity of rational spaces can be found in \cite{Jessup12} and \cite{Lechuga07}, for instance.\\

Denote by $H ^*(X;R)$ the cohomology ring of $X$ with coefficients in a ring $R$. It is well known, \cite{biblels}, that \[\nil\ \ker(H ^*(f;R))\le\secat(f),\] where $\nil$ denotes the \emph{nilpotency} of an ideal, $\nil\ I:=\min\ca k\colon \ I^{k+1}=0\cb.$\\

We will denote $X_0$ the rationalisation of $X$ and $f_0\colon X_0\rightarrow Y_0$ the rationalisation of $f$. Following the scheme of Jessup-Murillo-Parent in \cite{Jessup12}, the following approximation to rational sectional category can easily be deduced:\\

Let $\varphi\colon A\rightarrow B$ be a surjective \cdga\ morphism. Define $\asecat(\varphi)$ as the smallest integer $m$ such that the quotient map \[\rho_m\colon (A,d)\rightarrow\pa\frac{A}{K^{m+1}}, \ol{d}\pb\] admits a homotopy retraction, where $K$ denotes the kernel of $\varphi$. If $f$ is a continuous map, define $\asecat(f)$ as the least $\asecat(\varphi)$ with $\varphi$ a surjective model for $f$.\\

Observe that, for rational LS category, the main theorem of \cite{FHT83} asserts that $\cat(X_0)=\asecat(*\hookrightarrow  X)$. Inspired on this, the obvious questions to ask whether $\secat(f_0)=\asecat(f)$. Although one of these inequalities is not known in general, the other one holds:
\begin{proposition}\label{prop11}
For any continuous map $f$, $\secat(f_0)\le \asecat(f)$.
\end{proposition}

This proposition is used to establish results regarding higher topological complexity including generalisations of \cite[Theorem 1.2]{Lechuga07} and \cite[Theorem 1.4]{Jessup12}. Namely, we prove that if $X$ is a formal space then \[\tc_n(X_0)=\nil\ \ker\pa\mu_n\colon H^*(X,\mQ)^{\otimes n}\rightarrow H^*(X,\mQ)\pb,\] being $\mu_n$ the multiplication. We also prove that if $X$ is a space such that $\pi_*(X)\otimes \mQ$ is finite dimensional and concentrated in odd degrees, then \[\tc_n(X_0)=(n-1)\cat(X_0).\]

Finally, we introduce the concept of homology nilpotency, $\hnil$, as an improvement of the upper bound $\nil\ \ker\ \varphi$. We also study the case that $H^*(f,\mQ)$ is surjective giving a dimensional upper bound for $\secat(f_0)$ and establishing \[\nil\ H(K)\le\secat(f_0)\le \hnil\ K.\]

\section{Rational sectional category}
Throughout this paper we will often use rational homotopy theory techniques for which much more than needed can be found in \cite{Bible}. We now present some basic facts. To every simply connected CW-complex of finite type $X$ one can associate a \emph{rationalisation} map $\rho\colon X\rightarrow X_0$ where $\pi_*(X_0)$ is a rational vector space and $\pi_*(\rho)\otimes\mQ$ is an isomorphism, the space $X_0$ is called the \emph{rationalisation} of $X$. This construction is functorial in the sense that a map $f\colon X\rightarrow Y$ can also be \emph{rationalised} to a map $f_0\colon X_0\rightarrow Y_0$ commuting with rationalisations.\\

On the other hand, every such $X$ has a (Sullivan) minimal model $\sul{V}$. This is a commutative differential graded algebra over $\mQ$ (\cdga\ for short), where $\Lambda V$ denotes the free graded commutative algebra on a graded vector space $V$ and where $d(V)\subset\Lambda^{\ge2}V,$ see \cite[Chapter 12]{Bible}. This correspondence yields an equivalence between the homotopy categories of rational 1-connected CW-complexes of finite type and 1-connected \cdga's of finite type. Moreover, every \cdga\  morphism $\varphi\colon (A,d)\rightarrow (B,d)$ admits a minimal relative model
\[\xymatrix@C=2cm{
(A,d)\ar@{>->}[dr]_{i}\ar[r]^{\varphi}&(B,d)\\
&(A\otimes\Lambda V,d)\ar[u]^{\simeq}_{\psi}\\
}\]
where $i$ is the canonical injection, $\psi\circ i=\varphi$, \[d(V)\subset \pa A^+\otimes\Lambda V\pb+ \pa A\otimes\Lambda^{\ge 2}V\pb,\] and $\psi$ is a quasi-isomorphism.\\

Let $i\colon X\cofib Y$ be a cofibration, the $m$-fat-wedge of $i$ is the subspace of $Y^{m+1}$ defined as \[T^m(i):=\ca (x_0,\ldots,x_m)\in Y^{m+1}\colon x_j\in i(X)\mbox{ for some } j=0,\ldots, m\cb,\] with inclusion $W_m(i)\colon T^m(i)\rightarrow Y^{m+1}$.

\begin{theorem}(\cite{Fasso})\label{th:whiteheadCharSecat}
Let $f$ be a map and $i\colon X\rightarrow Y$ a cofibration replacement for $f$ with $Y$ a paracompact space. Then $\secat(f)$ is the smallest $m$ such that there exists a map $r$ making the following diagram homotopy commutative:
\[\xymatrix@C=2cm{
&T^{m}(i)\ar[d]^{W_m(i)}\\
Y\ar@{-->}[ur]^r\ar[r]_-{\Delta_{m+1}}&Y^{m+1}.\\
}\]
\end{theorem}

\begin{remark}
In order to take care of unnecessary technical conditions, we will consider the statement of previous theorem as the definition for sectional category.
\end{remark}

Now let $L$ be a simplicial complex on $m$ vertices, then the \emph{polyhedral product} of the pair $(Y,X)$ and $L$ is defined as \[\underline{(Y,X)}^L=\bigcup_{\sigma\in L}\pa \prod_{j=1}^mA_\sigma^j\pb\subset X^m,\] where \[A_\sigma^j:=
\begin{cases}
Y & \mbox{ if } j\in\sigma\\
X & \mbox{ if } j\not\in \sigma.\\
\end{cases}\] The $m$-fat-wedge $T^{m}(i)$ can be written in terms of the polyhedral product $\underline{(Y,X)}^{\partial \Delta^m}$. Therefore, if $A\rightarrow B$ is a surjective \cdga\  model for $i$ with kernel $K$, then \cite[Thm. 1]{Felix09} tells us that a model for the inclusion $W_m(i)\colon T^{m}(i)\cofib Y^{m+1}$ is the projection \[q_m\colon A^{\otimes m+1}\longrightarrow \frac{A^{\otimes m+1}}{K^{\otimes m+1}}.\] Recall also that if $A$ is a \cdga\ model for $Y$ then the diagonal map $\Delta_{m+1}\colon Y\rightarrow Y^{m+1}$ is modelled by the multiplication morphism $\mu_{m+1}\colon A^{\otimes m+1}\rightarrow A$. These remarks lead us to
\begin{definition}\index{sectional category in \cdga}
The \emph{sectional category} of a surjective \cdga\ morphism $\varphi\colon A\rightarrow B$, $\secat(\varphi)$, is the smallest $m$ for which there exists a \cdga\ morphism $\tau$ such that $\tau\circ i_m= \mu_{m+1},$
\[\xymatrix{
A^{\otimes m+1}\ar[dd]_{\mu_{m+1}}\ar[r]^{q_m} \ar@{>->}[dr]^{i_m}&\frac{A^{\otimes m+1}}{K^{\otimes m+1}}\\
 &(A^{\otimes m+1}\otimes\Lambda W_{m}, D)\ar[u]_{\simeq}\ar@{-->}[dl]^{\tau}\\
A,\\
}\]
where $K=\ker\ \varphi$ and $i_m$ is a relative Sullivan model for $q_m$. The \emph{sectional category} of any morphism is defined as the sectional category of any of its surjective replacements.
\end{definition}

Taking the pushout
\[\xymatrix{
A^{\otimes m+1}\ar[d]_{\mu_{m+1}}\ar@{>->}[r]^-{i_m}&\pa A^{\otimes m+1}\otimes\Lambda W_{m},D\pb\ar[d]\\
A\ar@{>->}[r]_-{j_{m}}&\pa A\otimes\Lambda W_{m},\ol{D}\pb
}\]
one can easily check, thanks to pushout's universal property, that $\secat(\varphi)\le m$ if and only if $j_m$ admits a retraction. In fact, if $\varphi$ models a map $f$ then $j_m$ is a model for the $m$-th Ganea map, $G_m(f)$.

\begin{definition}\label{def:mcsecathsecat}\index{module sectional category}\index{homology sectional category}
Let $\varphi$ be a surjective \cdga\ morphism and consider previous diagram. 
\begin{itemize}
\item[(i)] The \emph{module sectional category} of $\varphi$, $\msecat(\varphi)$, is the smallest $m$ such that $j_m$ admits an $A$-module retraction,
\item[(ii)] the \emph{homology sectional category} of $\varphi$, $\hsecat(\varphi)$, as the smallest $m$ such that $H(j_m)$ is injective.
\item[(iii)] If a continuous map $f$ is modelled by $\varphi$, define $\msecat(f):=\msecat(\varphi)$ and $\hsecat(f):=\hsecat(\varphi)$.  
\end{itemize}
\end{definition}

The module sectional category of this paper coincides with the one introduced in \cite{FGKV06}. The expected cohomological lower bound follows:

\begin{proposition}
Let $\varphi\colon A\rightarrow B$ be a surjective \cdga\ morphism. Then \[\nil\ \ker\ H (\varphi)\le\hsecat(\varphi).\]
\end{proposition}

\begin{proof}
Suppose $\hsecat(\varphi)=m$, then $H(j_m)$ is injective, where $j_m$ is as in previous diagram. Let $[x_0],\ldots,[x_m]\in \ker\ H (\varphi)$, since $\varphi$ is surjective, there are $a_0,\ldots, a_m\in A$ such that $x_i-da_i\in K$, for $i=0,\ldots, m$. We have constructed a cycle $z:=(x_0-da_0)\otimes\cdots\otimes(x_m-da_m)\in K^{\otimes m+1}$ therefore there exists $\xi\in A^{\otimes m+1}\otimes\Lambda W_{m}$ such that $D\xi=z$. Thus $H(j_m)([x_0]\cdots[x_m])=[0]$ and since $H(j_m)$ is injective, $[x_0]\cdots[x_m]=[0]$, proving that $\nil\ \ker\ H (\varphi)\le m$.
\end{proof}

The following chain of inequalities is now clear for a surjective morphism $\varphi$, \[\nil\ \ker\ H(\varphi)\le \hsecat(\varphi)\le\msecat(\varphi)\le\secat(\varphi).\]

We now prove:

\begin{theorem}\label{th:rationaModelSecat}
Let $\varphi$ be a surjective \cdga\  model for $f$, then \[\secat(f_0)=\secat(\varphi).\]
\end{theorem}

\begin{proof}
Transform $f_0$ into a cofibration $i\colon X\rightarrow Y$. Denote by $\alpha \colon (\Lambda V,d)\quism A$ be a minimal model for $A$. By construction $(\Lambda V,d)$ is a minimal model of $Y$, and as explained at the beginning of this section, if $K=\ker\ \varphi$, then the projection \[q_m\colon A^{\otimes m+1}\rightarrow \frac{A^{\otimes m+1}}{K^{\otimes m+1}}\] is a model for the map $W_m(i)\colon T^m(i)\rightarrow Y^{m+1}$. Denote by $((\Lambda V)^{\otimes m+1}\otimes \Lambda W_{m},D)$ a relative model for $q_m$. Then we have a commutative diagram
\[\xymatrix@C=2cm{
A^{\otimes m+1}\ar[r]^-{q_m}&\frac{A^{\otimes m+1}}{K^{\otimes m+1}}\\
(\Lambda V)^{\otimes m+1}\ar[u]^\simeq\ar[d]_\simeq\ar@{>->}[r]&((\Lambda V)^{\otimes m+1}\otimes \Lambda W_{m},D)\ar[u]_\simeq\ar[d]^\simeq\\
\apl(Y^{m+1})\ar[r]_-{\apl(W_m(i))}&\apl(T^m(i))\\
}\]

Suppose $\secat(f_0)=m$, then, by Theorem \ref{th:whiteheadCharSecat}, there is $\theta\colon Y\rightarrow T^m(i)$ such that the following diagram homotopy commutes
\[\xymatrix{
T^m(i)\ar[rr]^-{W_m(i)}&&Y^{m+1}\\
&Y.\ar[ul]^\theta\ar[ur]_\Delta&\\
}\]
Then the relative lifting lemma gives a morphism $\gamma$ making commutative the upper triangle and homotopy commutative the lower triangle in

\[\xymatrix@C=2cm{
(\Lambda V)^{\otimes m+1}\ar[rr]^{\mu_{m+1}}\ar@{>->}[d]&& \Lambda V\ar[d]_{\simeq}\\
(\Lambda V)^{\otimes m+1}\otimes\Lambda W_{m}\ar@{-->}[urr]^{\alpha}\ar[r]_-{\simeq}&\apl(T^{m}(i))\ar[r]_-{\apl(\theta)}& \apl(Y).\\
}\]
The desired $\tau$ is given by pushout's universal property:
\[\xymatrix@C=2cm{
(\Lambda V)^{\otimes m+1}\ar@{}[dr]|{po}\ar@{>->}[r]\ar[d]_{\simeq}&(\Lambda V)^{\otimes m+1}\otimes\Lambda W_{m}\ar[d]^{\simeq}\ar@/^1pc/[ddr]^{\alpha\circ \gamma}\\
A^{\otimes m+1}\ar@/_1pc/[drr]_{\mu_{m+1}}\ar@{>->}[r]&A^{\otimes m+1}\otimes\Lambda W_{m}\ar@{-->}[dr]^{\tau}\\
&&A.\\
}\]
This proves that $\secat(\varphi)\le \secat(f_0)$. For the second inequality, just apply spatial realization functor, \cite[Chapt. 17]{Bible}. 
\end{proof}

Previous theorem combined with \cite[Thm. 23]{Diaz12} gives
\begin{corollary}
Given $f$ a continuous map, then \[\secat(f_0)\le\secat(f).\]
\end{corollary}

Then we have for a map $f$ that
\[\nil\ \ker H^*(f,\mQ)\le \hsecat(f)\le\msecat(f)\le\secat(f_0)\le\secat(f).\]

\section{The invariant \asecat(f)}\label{sec:asecat}
Recall that a \cdga\ morphism $\psi\colon A\rightarrow B$ admits a homotopy retraction if there exists a map $r\colon (A\otimes\Lambda V, D)\rightarrow A$ such that $r\circ i=\id_A$, where $i\colon A\cofib (A\otimes\Lambda V,D)$ is a relative Sullivan model for $\psi$. We now introduce the following upper bound to rational sectional category:

\begin{definition}\label{def:asecat}
Let $\varphi\colon A\rightarrow B$ be a surjective \cdga\ morphism with kernel $K$ and consider the projection \[\rho_m\colon (A,d)\rightarrow\pa\frac{A}{K^{m+1}}, \ol{d}\pb.\]Define:
\begin{itemize}
\item[(i)] $\asecat(\varphi)$ as the smallest integer $m$ such that $\rho_m$ admits a homotopy retraction,
\item[(ii)] $\amsecat(\varphi)$ the smallest $m$ such that $\rho_m$ admits a homotopy retraction as $A$-modules,
\item[(iii)] $\ahsecat(\varphi)$ the smallest $m$ such that $H(\rho_m)$ is injective.
\end{itemize}
\end{definition}

If $X$ is a space modelled by $\sul{V}$ and $\epsilon\colon\sul{V}\rightarrow \mQ $ is the augmentation then $\amsecat(\epsilon)$ is the classical module category of $X_0$ and $\ahsecat(\epsilon)$ is the rational Toomer invariant of $X$. Also, if $\mu\colon \sul{V}\otimes\sul{V}\rightarrow \sul{V}$ is the multiplication, then $\asecat(\mu)={\rm \bf tc}(X)$ and $\amsecat(\mu)={\rm \bf mtc}(X)$, as defined in \cite{Jessup12}.\\

Observe that $\asecat(\varphi)$, $\amsecat(\varphi)$ and $\ahsecat(\varphi)$ are not invariants of the weak homotopy type of $\varphi$. This can be seen explicitly in
\begin{example}\label{ex4.9}
Consider $A:=(\Lambda(a,b)/(a^2),d)$ the \cdga\ defined as $|a|=4$, $|b|=3$, $db=a$ and $\varphi\colon A\rightarrow \mQ$ the augmentation. Since $B:=(\Lambda v_7,0)$ is a minimal model for $A$, the augmentation $\psi\colon B \rightarrow \mQ$ is weakly equivalent to $\varphi$. It is easy to see that $\secat(\psi)=1$ while $\ahsecat(\varphi)\ge 2$.
\end{example}
This definition extends to continuous maps:

\begin{definition}
Let $f$ be a continuous map. Define:
\begin{itemize}
\item[(i)]$\asecat(f)$ as the least $\asecat(\varphi)$ with $\varphi$ a surjective model for $f$,
\item[(ii)] $\amsecat(f)$ as the smallest $\amsecat(\varphi)$ with $\varphi$ a surjective model for $f$,
\item[(iii)] $\ahsecat(f)$ as the smallest $\ahsecat(\varphi)$ with $\varphi$ a surjective model for $f$.
\end{itemize}
\end{definition}

\begin{proposition}\label{prop1}
For any surjective \cdga\ morphism $\varphi$, we have
\begin{itemize}
\item[(i)]$\secat(\varphi)\le \asecat(\varphi)$, 
\item[(ii)]$\msecat(\varphi)\le\amsecat(\varphi)$,
\item[(iii)]$\hsecat(\varphi)\le\ahsecat(\varphi)$.
\end{itemize}
\end{proposition}

\begin{proof}
Denote by $(A\otimes\Lambda Z_{m},D)\quism \frac{A}{K^{m+1}}$ a relative Sullivan model for $\rho_m$. Since multiplication induces a map \[\ol{\mu}\colon \frac{A^{\otimes m+1}}{K^{\otimes m+1}}\longrightarrow \frac{A}{K^{m+1}},\] the relative lifting lemma gives a morphism $\alpha$ making commutative the diagram 
\[\xymatrix{
A^{\otimes m+1}\ar[r]^{\mu_{m+1}}\ar@{>->}[d]&A\ar@{>->}[r]& A\otimes\Lambda Z_{m}\ar@{->>}[d]^{\simeq}\\
A^{\otimes m+1}\otimes\Lambda W_{m}\ar@{-->}[urr]^{\alpha}\ar[r]_-{\simeq}&\frac{A^{\otimes m+1}}{K^{\otimes m+1}}\ar[r]_(0.45){\ol{\mu}}& A/K^{m+1},\\
}\]
If $r\colon \pa A\otimes\Lambda Z_{m},D\pb\rightarrow A$ is a homotopy retraction for $\rho_m$ then the desired map $\tau$ is given by $r\circ\alpha$.
\end{proof}

The following corollary includes Proposition \ref{prop11}.
\begin{corollary}
If $f$ is a continuous map, then
\begin{itemize}
\item[(i)]$\secat(f_0)\le \asecat(f)$,
\item[(ii)]$\msecat(f)\le\amsecat(f)$,
\item[(iii)]$\hsecat(f)\le\ahsecat(f)$.
\end{itemize}

\end{corollary}

We now prove that for computing $\asecat(f)$ we can restrict to models for $f$ between Sullivan algebras and the answer does not depend on the choice of the model. The following lemma is straightforward.

\begin{lemma}\label{lem:psiRetractSoDoesPhi}
Consider the commutative \cdga\ diagram where $\omega$ is a quasi-isomorphism,
\[\xymatrix{
A\ar[d]_\varphi\ar[r]^-\omega_-\simeq&B\ar[d]\ar[d]^\psi\\
C\ar[r]&D.\\
}\]
If $\psi$ admits a homotopy retraction, then so does $\varphi$.
\end{lemma}

We can now prove
\begin{lemma}
Let $\varphi\colon A\rightarrow B$ be a surjective \cdga\  morphism and $\psi\colon \sul{T}\quism A$ a surjective Sullivan model for $A$. Then $\asecat(\varphi\circ\psi)\le\asecat(\varphi)$, $\amsecat(\varphi\circ\psi)\le\amsecat(\varphi)$ and $\ahsecat(\varphi\circ\psi)\le\ahsecat(\varphi)$.
\end{lemma}

\begin{proof}
The morphism $\psi$ induces a diagram
\[\xymatrix{
\Lambda T\ar[r]_-{\simeq}^-{\psi}\ar[d]&A\ar[d]\\
\frac{\Lambda T}{L^m}\ar[r]&\frac{A}{K^m},\\
}\] 
where $L$ denotes the kernel of $\varphi\circ\psi$. The result follows by previous lemma.
\end{proof}

\begin{lemma}
Let $\varphi\colon \sul{V}\rightarrow B$ be a surjective \cdga\  morphism and consider $\phi$ an extension of $\varphi$,
\[\xymatrix{
\sul{V}\ar@{>->}[dr]_{\simeq}\ar@{->>}[rr]^{\varphi}&&B\\
&\sul{V}\otimes\sul{W}.\ar@{->>}[ur]_{\phi}&
}\]
Then $\asecat(\varphi)=\asecat(\phi)$, $\amsecat(\varphi)=\amsecat(\phi)$ and $\ahsecat(\varphi)=\ahsecat(\phi)$.
\end{lemma}

\begin{proof}
Remark that $W$ admits a basis of the form $\ca v_i,w_i\cb$ with $dv_i=w_i$ and that, by a change of variable, one can suppose that $\phi(W)=0$. Now define on $\Lambda W$ a derivation $s$ of degree $-1$ by $s(w_i)=v_i$ and $s(v_i)=0$. For each $l\ge 1$, \[s\circ d+d\circ s\colon \Lambda^l W\rightarrow \Lambda^l W\] is multiplication by $l$ and thus $H(\Lambda^lW,d)=0$. Now, denoting $K=\ker \varphi$ and $L=\ker \phi$, we have that $L=K\oplus\Lambda V\otimes\Lambda^+W$ and $L^m=K^m\oplus I$ with \[I=\sum_{l=0}^{m-1}K^l\otimes \Lambda^{\ge m-l}W,\] with $K^0:=\Lambda V$. Since, as a vector space, \[I=K^{m-1}\otimes\Lambda^+W\oplus \frac{K^{m-2}}{K^{m-1}}\otimes\Lambda^{\ge 2}W\oplus\cdots\oplus\frac{K}{K^2}\otimes\Lambda^{\ge m-1}W\oplus \frac{\Lambda V}{K}\otimes\Lambda^{\ge m}W,\] an inductive argument shows that $H(I)=0$. As the five lemma gives a diagram

\[\xymatrix{
\Lambda V\ar[d]\ar@{>->}[rr]^-{\simeq}&&\Lambda V\otimes\Lambda W\ar[d]\\
\frac{\Lambda V}{K^m}\ar[rr]^-{\simeq}&&\frac{\Lambda V\otimes\Lambda W}{L^m},\\
}\] the lemma follows.
\end{proof}

\begin{corollary}
Let $f$ be a continuous map and $\varphi$ be a surjective model for $f$ between Sullivan algebras. Then $\asecat(f)=\asecat(\varphi)$, $\amsecat(f)=\amsecat(\varphi)$, $\ahsecat(f)=\ahsecat(\varphi)$.
\end{corollary}

\section{The case $H(\varphi)$ surjective}
Suppose $\varphi\colon A\rightarrow B$ is a surjective morphism with $H (\varphi)$ also surjective and write $K=\ker\ \varphi$. Then the short exact sequence
\[\xymatrix{
0 \ar@{^{(}->}[r]&K \ar@{^{(}->}[r]&A \ar@{->>}[r]&B \ar@{->>}[r]&0\\
}\]
yields the short exact sequence
\[\xymatrix{
0 \ar@{^{(}->}[r] &H (K) \ar@{^{(}->}[r]&H (A) \ar@{->>}[r]&H (B) \ar@{->>}[r]&0,\\
}\]
which tells us that $\nil\ \ker H (\varphi)=\nil\ H (\ker \varphi) $. Moreover, the homology of the projection \[q_m\colon A^{\otimes m+1}\longrightarrow \frac{A^{\otimes m+1}}{K^{\otimes m+1}}\] is given by \[H(q_m)\colon H(A)^{\otimes m+1}\longrightarrow \frac{H (A)^{\otimes m+1}}{H (K)^{\otimes m+1}}.\]

\begin{example}\label{exp1}
Consider the surjective morphism \[\varphi:\pa\Lambda(a_3,b_3,x_5);\ dx=ab\pb\longrightarrow (\Lambda(a,b)/(ab),0)\] whose kernel is $K:=(ab,x)$. We have that $\asecat(\varphi)=2$ since the projection $\rho_1\colon \Lambda(a,b,x)\rightarrow\frac{\Lambda(a,b,x)}{(abx)}$ is not injective in homology and $\nil\ K=2$. On the other hand, because of previous remarks and the fact that $H(K)=H^{\ge 8}(K)$, we have a commutative diagram
\[\xymatrix{
(\Lambda(a,b,x))^{\otimes 2}\ar[dd]_{\mu_2}\ar[r] \ar@{>->}[dr]&\frac{(\Lambda(a,b,x))^{\otimes 2}}{K^{\otimes 2}}\\
 &(\Lambda(a,b,x;d)^{\otimes 2}\ar[u]_{\simeq}\otimes\Lambda W_1, D)\ar[dl]^{\tau}\\
\Lambda(a,b,x;d)\\
}\]
with $W_1=W_1^{\ge 15}$ and $\tau(W_1)=0$. This shows that $\secat(\varphi)=1<2=\asecat(\varphi)$.
\end{example}

The idea for computing $\secat(\varphi)$ in the previous example can be generalised:

\begin{proposition}
Let $\varphi\colon A\rightarrow B$ be a surjective \cdga\ morphism such that $H(\varphi)$ is also surjective, $A=A^{< l}$ and $H(K)=H^{\ge k}(K)$. Then \[\secat(\varphi)\le \frac{l+1}{k}.\]
\end{proposition}

\begin{proof}
In this case, since $W_{m}=W_{m}^{\ge (m+1)k-1}$, a morphism $r$ making the diagram

\[\xymatrix{
A^{\otimes m+1}\ar[dd]_{\mu_{m+1}}\ar[r] \ar@{>->}[dr]^{i}&\frac{A^{\otimes m+1}}{K^{\otimes m+1}}\\
 &(A^{\otimes m+1}\otimes\Lambda W_{m}, D)\ar[u]_{\simeq}\ar[dl]^{\tau}\\
A,\\
}\] 
commute can be defined as $r(a):=\mu_{m+1}(a)$, for $a\in A^{\otimes m+1}$ and $r(W_{m}):=0$.
\end{proof}

\section{Homology nilpotency of an ideal}
Consider a surjective \cdga\  morphism $\varphi\colon A\fib B$ with $K=\ker \varphi$, then, by Proposition \ref{prop1}, $\secat(\varphi)\le \nil\ K$ but when $A$ is a Sullivan algebra then it is very \emph{likely} that $\nil\ K=\infty$. 

\begin{definition}\index{homology nilpotency}
Let $I$ be an ideal of a \cdga\  $A$, the \emph{homology nilpotency} of $I$ is \[\hnil\ I:=\min\ca k\colon I^{k+1}\subset J,\ J \mbox{ acyclic ideal of } A\cb.\]
\end{definition}

Remark that if $K^{m+1}$ is included an acyclic ideal $J$ of $A$ then we have a commutative diagram
\[\xymatrix{
A\ar@{->>}[r]^(0.41){\rho_m}\ar[dr]_{\simeq}&\frac{A}{K^{m+1}}\ar[d]\\
&\frac{A}{J}\\
}\]
which can be used to deduce a homotopy retraction of $\rho_m$. As a consequence we have
\begin{proposition}
Let $\varphi$ be a surjective \cdga\ morphism with $K:=\ker \varphi$. Then, 
\[\nil \ker H(\varphi)\le\secat(\varphi)\le\asecat(\varphi)\le \hnil\ K\le \nil\ K,\] and, if $H(\varphi)$ is surjective, 
\[\nil\ H(K)\le \secat(\varphi)\le\hnil(K).\]
\end{proposition}

\begin{example}
Consider $A=(\Lambda a_2,x; dx=a^2)$ and $\epsilon$ the augmentation on $A$. Then, since $K^2\subset (a^2,x)$, $\cat(A)=\secat(\epsilon)=\asecat(\epsilon)=\hnil(K)=1$, and $\nil(K)=\infty$.
\end{example}



\section{Sectional category of formal morphisms}
As the LS category of formal spaces, the sectional category of formal maps is very easy to compute.
\begin{definition}
A \cdga\ morphism $\varphi$ is said to be \emph{formal} if it is weakly equivalent to $H(\varphi)$. A continuous map is said to be \emph{formal} if it admits a formal \cdga\  model.
\end{definition}

For more on formal morphisms the reader is referred to \cite{FT88} and \cite{Vig79}. It is obvious from this definition that if $\varphi\colon  A\rightarrow B$ is formal then both $A$ and $B$ are formal as well.

\begin{theorem}\label{mainformal}
Let $\varphi\colon  A\rightarrow B$ be a formal morphism with $H (\varphi)$ surjective. Then \[\secat(\varphi)=\nil(\ker H (\varphi)).\]
\end{theorem}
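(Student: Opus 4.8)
The lower bound $secat(f)\ge nil(\ker H(f))$ is the classical cohomological bound already recalled in the introduction (with $p^*=H(f)$, since $secat(f_0)\le secat(f)$ we actually only need it for $f_0$, but it holds for $f$ directly). So the real content is the reverse inequality $secat(f)\le nil(\ker H(f))$, and since $secat(f_0)\le secat(f)$ it suffices — in fact it is the natural thing — to prove $secat_0(f)\le nil(\ker H(f))$ and then observe that for a formal morphism the two agree. Set $n=nil(\ker H(f))$; I want to show $secat_0(f)\le n$. By Theorem \ref{main}, it is enough to exhibit a homotopy retraction for the projection $\rho_{n+1}:A\to A/K^{n+1}$ for a conveniently chosen surjective model $A\to B$ of $f_0$.

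The idea is to replace $f$ by its formal model. Using the definition of formal morphism, there is a \cdga diagram connecting $f$ (via quasi-isomorphisms) to $H(f):H(A)\to H(B)$; since Lemma \ref{retraction} says homotopy retractions of the relevant projections are invariant under quasi-isomorphism, and since the construction $A\mapsto A/K^{m+1}$ is visibly compatible with the kernel under the zig-zag (the kernel $K$ of $H(f):H(A)\to H(B)$ is exactly $\ker H(f)$, so $K^{n+1}$ really is the $(n+1)$-st power of $\ker H(f)$, which vanishes by definition of nilpotence), we reduce to the purely algebraic statement: the projection $H(A)\to H(A)/(\ker H(f))^{n+1}=H(A)$ admits a homotopy retraction. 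But that projection is the identity (since $(\ker H(f))^{n+1}=0$ by definition of $n=nil(\ker H(f))$), so the retraction is trivial. One must be a little careful here: $H(A)$ with zero differential need not be minimal and $H(f)$ need not be surjective-on-the-nose, so to apply Theorem \ref{main} one first surjects $H(A)\to H(B)$ in the standard way (tensoring with a contractible factor, as in the construction preceding Lemma \ref{main2}); this does not change the kernel up to the relevant quasi-isomorphism and does not change the powers of the kernel in the relevant degrees, so the conclusion persists.

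Concretely the steps are: (1) recall the lower bound $secat(f_0)\ge nil(\ker H(f_0))=nil(\ker H(f))$; (2) for the upper bound, pick the formal model and, via the definition of formality together with Lemma \ref{retraction} (applied to the relevant projection maps, which are functorial in the \cdga), reduce the existence of a homotopy retraction for $\rho_{n+1}$ associated to $f$ to the existence of one associated to $H(f):H(A)\to H(B)$ with its zero differentials; (3) massage $H(f)$ into a surjection with surjective cohomology by the standard contractible-extension trick so that Theorem \ref{main} applies, checking that the kernel is still (quasi-isomorphic to, and with the same low-degree powers as) $\ker H(f)$; (4) observe $(\ker H(f))^{n+1}=0$, so $\rho_{n+1}$ is (quasi-isomorphic to) an isomorphism and trivially retracts; (5) conclude $secat_0(f)\le n$ by Theorem \ref{main}, hence $secat(f)\le n$ together with (1) forces equality, using that $f$ formal gives $secat(f)=secat(f_0)$ — this last equality being where one invokes that for formal morphisms the cohomological bound is sharp, i.e. combining (1) and the just-proved $secat_0(f)\le n$ with the general inequality $secat(f_0)\le secat(f)$ and the known fact that formal maps realize the lower bound for $secat$ itself (or simply: $n\le secat(f_0)\le secat(f)$, and we must also bound $secat(f)$ from above by $n$, which is the point where formality of the \emph{map}, not just of $A$ and $B$, is essential).

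The main obstacle I expect is step (5): showing $secat(f)\le n$ rather than merely $secat_0(f)\le n$. The inequality $secat(f_0)\le secat(f)$ goes the wrong way for this. One route is to note that for a formal morphism the relevant obstruction lives in the cohomology algebra and the retraction built in steps (2)–(4) can be realized over the formal model itself, so that the same $m$ works for $f$ and $f_0$; in other words, formality of $f$ is exactly what promotes the rational statement to an integral one. Pinning down this promotion carefully — i.e. that a formal morphism satisfies $secat(f)=secat(f_0)$ — is the delicate point, and I would handle it by invoking the formal model diagram directly and transporting the open-cover/section data, or by citing the corresponding rigidity for formal maps; the rest of the argument is then essentially bookkeeping with Lemma \ref{retraction}, Lemma \ref{main2} and Theorem \ref{main}.
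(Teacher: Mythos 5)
Your steps (1)--(4) reproduce the paper's argument: by formality, $secat(f)=secat(H(f))$ (the invariant attached to a \cdga morphism is defined through its Sullivan models and is invariant under quasi-isomorphism of the morphism, cf.\ Lemma \ref{retraction} and Proposition \ref{techfasso}), and one then applies Theorem \ref{main} directly to the morphism $H(f):H(A)\to H(B)$, which is surjective by hypothesis and has kernel $\ker H(f)$ with $(\ker H(f))^{n+1}=0$, so that $\rho_{n+1}$ is the identity and trivially retracts. Two of your precautions are unnecessary: surjectivity of $H(f)$ is part of the hypotheses, and there is no need to transport the quotient $A/K^{n+1}$ along the formality zig-zag or to compare $\ker f$ with $\ker H(f)$ --- one simply applies Theorem \ref{main} to $H(f)$ itself (Theorem \ref{main} does not require minimality of the source).

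Your step (5), however, is a detour created by a misreading of the statement, and the patch you propose for it would fail. Here $f$ is a \cdga morphism, and $secat(f)$ means the rational invariant $Secat_0$ of Section 2 (equivalently, $secat(g_0)$ when $f$ models a continuous map $g$); this is exactly how the theorem is used in the subsequent proposition, which concludes about $TC_n(X_0)$, not $TC_n(X)$. There is no integral statement to promote to, and the ``known fact'' you want to invoke --- that a formal map satisfies $secat(g)=secat(g_0)$ --- is not true in general (formality is a rational notion, and torsion phenomena such as zero-divisor cup-length over finite fields can push $secat(g)$ strictly above $secat(g_0)$), so that route cannot be closed. Deleting step (5) leaves precisely the paper's proof.
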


\begin{proof}
By formality, $\secat(\varphi)=\secat(H (\varphi))$. Write $m:=\nil(\ker H (\varphi))$, we must prove that $\secat(H (\varphi))\le m$ but this is direct consequence of Proposition \ref{prop1} and the fact that $\pa \ker H (\varphi)\pb^{m+1}=\ca 0\cb$.
\end{proof}

\section{Applications}
As a direct consequence of Theorem \ref{th:rationaModelSecat} we have a rational model for topological complexity:
\begin{proposition}
Let $A$ be a \cdga\ model for a space $X$ and $\mu_n\colon A^{\otimes n}\rightarrow A$ the $n$-multiplication. Then \[\tc_n(X_0)=\secat(\mu_n).\]
\end{proposition}
\begin{proof}
Since rationalisation commutes with limits, we have \[\tc_n(X_0)=\secat(\Delta_{X_0}^n)=\secat((\Delta_X^n)_0)=\secat(\mu_n).\]
\end{proof}

We extend this proposition to
\begin{definition}
Let $X$ be a space, then define
\begin{itemize}
\item[(i)] $\mtc_n(X):=\msecat(\Delta_n)$,
\item[(ii)] $\htc_n(X):=\hsecat(\Delta_n)$.
\end{itemize}
\end{definition}

It was proven by L. Lechuga and A. Murillo in \cite{Lechuga07} that the topological complexity of formal spaces equals the nilpotency of the kernel of the multiplication $H^*(X,\mQ)\otimes H^*(X,\mQ)\rightarrow H^*(X,\mQ)$. We extend this result:
\begin{theorem}\label{th:tcnFormal}
If $X$ is a formal space, then \[\tc_n(X_0)=\nil\ \ker\pa \mu_n\colon H ^*(X,\mQ)^{\otimes n}\rightarrow H ^*(X,\mQ)\pb,\] and thus $\tc_n(X_0)=\mtc_n(X_0)=\htc_n(X_0)$.
\end{theorem}
\begin{proof}
If $X$ is a formal space, $\Delta_{X}^n$ is a formal map modelled by $\mu_n$. The result follows from Theorem \ref{mainformal}.
\end{proof}

Observe now that if $G$ is a subset of a \cdga\ $A$ and $I$ the ideal of $A$ generated by $G$ then $\nil\ I$ is the largest $m$ for which there exist $x_1,\ldots,x_m\in G$ such that $x_1\cdots x_m\ne 0$. Also, if $A^+$ is generated by $\ca x_i\cb_{i\in I}$, then $K_n$, the kernel of the $n$-multiplication morphism $\mu_n\colon A^{\otimes n}\rightarrow A$, is generated by $\ca x_{i,j}-x_{i,1}:\ i\in I,\ 2\le j\le n\cb$ where \[x_{i,j}=1\otimes\cdots\otimes 1\otimes x_i\otimes 1\otimes\cdots\otimes 1\in A^{\otimes j-1}\otimes A^+\otimes A^{\otimes n-j}.\] This is consequence of the fact that an element $x_1\otimes\cdots\otimes x_n$ can be written in the form\index{kernel of multiplication} 
\begin{align*}
\pa x_{1,1}\cdots x_{n-1,n-1}\pb \cdot& \pa x_{n,n}-x_{n,n-1}\pb +\\
\pa x_{1,1}\cdots x_{n-2,n-2}\pb \cdot& \pa x_{n-1,n-1}x_{n,n-1}-x_{n-1,n-2}x_{n,n-2}\pb +\\
\vdots&\\
\pa x_{1,1}\pb \cdot&\pa x_{2,2}\cdots x_{n,2}-x_{2,1}\cdots x_{n,1}\pb +\\
\ &(x_{1,1}\cdots x_{n,1}).\\
\end{align*}

\begin{proposition}\label{prop:nilKnGeNilKn1NilA}
Let $A$ be a \cdga\ and $K_n$ the kernel of the $n$-multiplication morphism $A^{\otimes n}\rightarrow A$. Then for $n\ge 3$, \[\nil\ K_n\ge \nil\ K_{n-1} + \nil\ A^+.\]
\end{proposition}

\begin{proof}
Write $r=\nil\ K_{n-1}$ and $s=\nil\ A^+$. Consider $\omega\ne 0$ a product of $r$ factors in $K_{n-1}$ and $\alpha=a_1\cdots a_s\ne 0$ with $a_i\in A^+$. Then the element $(\omega\otimes 1)(a_{1,n}-a_{1,1})\cdots(a_{s,n}-a_{s,1})=(\omega\otimes\alpha)+\xi$ with $\xi\in A^{\otimes n-1}\otimes A^{<|\alpha|}$ must be non-zero. This proves that $\nil\ K_n\ge r+s$.
\end{proof}

As pointed out by B\'arbara Guti\'errez, \cite{Gutierrez14}, the inverse inequality does not hold:
\begin{example}
Consider the \cdga\ $A=\Lambda(a,b,c,d,e)/I$ generated by elements of odd degree where $I$ is the ideal generated by $ad$, $ae$, $bcd$, $bce$. The only non-zero products of length $3$ of $A$ are $abc$, $bde$ and $cde$ and there are no non-zero products of length $4$, then we have that $\nil\ A=3$ $\nil\ K_2=5$. But $\nil\ K_3=9$, since \[\omega:= \prod_{x=a}^e(x\otimes 1\otimes 1-1\otimes x\otimes 1)\prod_{x=b}^e(x\otimes 1\otimes 1-1\otimes 1\otimes x),\] is a non-zero element of $K_3^9$ because the non-zero summand $bde\otimes abc\otimes cde$ appears only once when we develop $\omega$. 
\end{example}

\begin{corollary}
Suppose $A$ is a \cdga\ satisfying $A=A^{\mbox{\emph{even}}}$. Then \[\nil\ K_n=n(\nil\ A^+).\]
\end{corollary}
\begin{proof}
For $n=2$, choose elements $x_1,\ldots,x_r\in A^+$ with $r=\nil\ A^+$ such that the product $x_1\cdots x_r$ is non-zero. Then $\prod_{i=1}^r(x_i\otimes 1-1\otimes x_i)^2$ is a non-zero element in $K_2^{2r}$. Since $\nil\  K_2\le\nil\ (A\otimes A)^+=2(\nil\ A^+)$, we get $\nil\ K_2=2(\nil\ A^+)$. The result now follows by induction and Proposition \ref{prop:nilKnGeNilKn1NilA}.
\end{proof}
Since for formal spaces $X$, $cat(X_0)=\nil\ H^+(X,\mQ)$(apply Theorem \ref{th:tcnFormal} to $*\hookrightarrow X$), Proposition \ref{prop:nilKnGeNilKn1NilA} combined with Theorem \ref{th:tcnFormal} directly implies

\begin{theorem}\label{thm3}
Let $X$ be a formal space, then for $n\ge 2$, \[\tc_n(X_0)\ge \tc(X_0)+(n-2)\cat(X_0).\]
\end{theorem}

Recall that the wedge of two formal spaces remains a formal space.
\begin{proposition}
Let $X$ and $Y$ be formal spaces, then \[\tc(X\vee Y)\ge \cat(X)+\cat(Y).\]
\end{proposition}
\begin{proof}
Let $w_1$ and $w_2$ be monomials in $H ^*(X)$ and $H ^*(Y)$ of maximal length $n$ and $m$, $w_1=a_1\cdots a_n$ and $w_2=b_1\cdots b_m$. Then, using the notation $a^-=1\otimes a-a\otimes 1$, the identity \[a_1^-\cdots a_n^-b_1^-\cdots b_m^-=w_1\otimes w_2\pm w_2\otimes w_1\] shows that $\tc(X\vee Y)\ge n+m$.
\end{proof}

\begin{example}
Let $X$ be the wedge $(S^3\times S^3)\vee (S^3\times S^3)$. Then $\tc(S^3\times S^3)=2$ but $\tc(X)=4$.
\end{example}
The cohomology of $X$ is $\Lambda(x,y,z,t)/(xz,xt,yz,yt)$, with $x,y,z,t$ in degree 3. Then writing $x^-=1\otimes x-x\otimes 1$ and so on, we see that $x^-y^-z^-t^-=xy\otimes zt+zt\otimes xy$. This shows that $\nil\ \ker \mu_2=4$ and $\tc(X)\ge 4$. On the other hand, $\tc(X)\le \cat(X\times X)=4$.\\

We now generalize Theorem 1.4 in \cite{Jessup12}.

\begin{theorem}\label{th:TcRatPiOddDegrees}
If $\pi_*(X)\otimes\mQ$ is finite dimensional and concentrated in odd degrees. Then $\tc_n(X_0)=(n-1)\cat(X_0)$.
\end{theorem}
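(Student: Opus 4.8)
The plan is to reproduce the mechanism of the previous theorem (the case $n=2$, \cite[Thm. 1.4]{aniceto}) and to trap $TC_n(X_0)$ between two quantities that both equal $(n-1)\mathrm{cat}(X_0)$.

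First I would fix a minimal Sullivan model $(A,d)=\sul{(x_1,\ldots,x_r)}$ of $X$ with all $x_i$ of odd degree, so that $r=\dim\pi_*(X)\otimes\mQ$ and, as already used for the $n=2$ case, $\mathrm{cat}(X_0)=r$. The $n$-diagonal $\Delta^n_X$ is modelled by the surjective multiplication morphism $\mu_n:A^{\otimes n}\to A$, and by the lemma describing the kernel of an $n$-multiplication, $\ker\mu_n$ is the ideal generated by the $r(n-1)$ elements $x_{i,j}-x_{i,1}$ with $1\le i\le r$ and $2\le j\le n$. Each of these generators has odd degree, hence squares to zero, so no product of more than $r(n-1)$ of them is non-zero; by the lemma expressing the nilpotence of an ideal in terms of a generating set, $(\ker\mu_n)^{r(n-1)+1}=0$. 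Therefore the projection $A^{\otimes n}\to A^{\otimes n}/(\ker\mu_n)^{r(n-1)+1}=A^{\otimes n}$ is the identity, which trivially admits a homotopy retraction, and Theorem~\ref{main} gives $TC_n(X_0)=secat_0(\mu_n)\le r(n-1)=(n-1)\mathrm{cat}(X_0)$.

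For the reverse inequality I would invoke the general estimate $\mathrm{cat}(Z^{n-1})\le TC_n(Z)$ of \cite{tcn}, taken at $Z=X_0$. Since rationalization commutes with finite products and rational Lusternik--Schnirelmann category is additive for products (\cite{bible}), one has $\mathrm{cat}\big((X_0)^{n-1}\big)=\mathrm{cat}_0(X^{n-1})=(n-1)\mathrm{cat}_0(X)=(n-1)\mathrm{cat}(X_0)$, whence $TC_n(X_0)\ge(n-1)\mathrm{cat}(X_0)$. Combined with the preceding paragraph this forces equality.

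The upper bound is the routine half: it is essentially a verbatim transcription of the $n=2$ argument into Theorem~\ref{main}. The delicate point is the lower bound. One would like to read it off the cohomological nilpotence $nil\,\ker\big(\mu_n\colon H^*(X_0;\mQ)^{\otimes n}\to H^*(X_0;\mQ)\big)$, but when the differential of the minimal model is non-trivial this number is not visibly as large as $(n-1)r$ (even though in concrete examples it turns out to be), so I would instead obtain the lower bound from Rudyak's product inequality together with the additivity of rational category; one should also keep in mind the standard input, already used for the $n=2$ case, that $\mathrm{cat}(X_0)=\dim\pi_*(X)\otimes\mQ$.
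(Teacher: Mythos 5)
Your proposal is correct and follows essentially the same route as the paper: the upper bound comes from $(\ker\mu_n)^{r(n-1)+1}=0$ (odd-degree generators square to zero) together with Theorem~\ref{main}, and the lower bound from $cat(X_0^{n-1})\le TC_n(X_0)$ with $cat(X_0^{n-1})=(n-1)r$. The only cosmetic difference is that you cite Rudyak's inequality $cat(Z^{n-1})\le TC_n(Z)$ directly, whereas the paper re-derives it on the spot by pulling back the evaluation fibration $p:X^{[0,1]}\to X^n$ over $X^{n-1}\times *$ and comparing sectional categories.
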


\begin{proof}
Let $(A,d)=\sul{\pa x_1,\ldots,x_r\pb}$ be a model of $X$ with $x_i$ in odd degree. Then \[(A,d)^{\otimes n}=\sul{\pa x_{1,1},\ldots, x_{r,1},x_{1,2},\ldots, x_{2,r},\ldots,x_{1,n},\ldots, x_{r,n} \pb}\] and $K$ the kernel of the $n$ multiplication is generated by the elements \[\set{x_{i,j}-x_{i,1}:\ 1\le i\le r,\ 2\le j\le n}.\] Since the square of these elements is zero, we have that $K^{r(n-1)+1}=0$ and so $\tc_n(X)\ge r(n-1)$. 

Now consider the pullback diagram
\[\xymatrix{
PX\ar@{}[dr]|{pb}\ar[r]\ar[d]_{q}&X^{[0,1]}\ar[d]^{p}\\
X^{n-1}\times * \ar[r] & X^n\\
}\]
where $p(\omega)=\pa \omega(0),\omega(\frac{1}{n-1}),\omega(\frac{2}{n-1}),\ldots,\omega(1)\pb$. Since $\tc_n(X)=\secat(p)$ (Proposition \ref{prop1}), $\cat(X^{n-1})=\secat(q)$ and $\secat(q)\le \secat(p)$, we have \[\tc_n(X_0)\le r(n-1)=\nil(A^{\otimes n-1}) =\cat(X_0^{n-1})\le \tc_n(X_0).\]
\end{proof}

A similar result can be found in \cite{Lupton13} for integral H-spaces.\\

\bibliography{bibliography.bib}{}
\bibliographystyle{plain}

\end{document}